\providecommand{\U}[1]{\protect \rule{.1in}{.1in}}
\def\e{\mathbb{E}}
\def\d{d}
\def\x{\underline{\sigma}}
\def\s{\overline{\sigma}}
\def\E{\hat{\e}}
\newtheorem{theorem}{Theorem}[section]
\newtheorem{corollary}[theorem]{Corollary}
\newtheorem{definition}[theorem]{Definition}
\newtheorem{example}[theorem]{Example}
\newtheorem{lemma}[theorem]{Lemma}
\newtheorem{proposition}[theorem]{Proposition}
\newtheorem{remark}[theorem]{Remark}
\newenvironment{proof}[1][Proof]{\noindent \textbf{#1.} }{$\Box$}
\begin{document}

\title{On the strict comparison theorem for $G$-expectations}
\author{Xinpeng LI
\\School  of Mathematics, Shandong University\\ 250100, Jinan,
China} \date{January 20, 2010 } \maketitle

\begin{quotation}
\textbf{Abstract. } This paper investigates the strict comparison
theorem under the framework of $G$-expectation, i.e., let $X\leq Y$
q.s., if $X,Y$ satisfy some additional conditions, then
$\E[X]<\E[Y]$.
\end{quotation}

\section{Introduction}

In 2006, to deal with the model uncertainty problem in finance, Peng
establish the sublinear expectation theory and introduce a new
sublinear expectation, called $G$-expectation, which have many well
properties as classical linear expectation except linearity (see
Peng \cite{Peng2006a,Peng2007,Peng2006b,PengSurvey,Peng2009}).
Unlike the well-known $g$-expectation, $G$-expectation was
introduced via fully nonlinear parabolic partial differential
equations.

In this paper, we consider the following problem: for a linear
expectation $E_P$, we know that if $X\leq Y$ then $E_P[X]\leq
E_P[Y]$, furthermore, if $X\leq Y$ and $P(X<Y)>0$, then
$E_P[X]<E_P[Y]$. If we replace the linear expectation $E_P$ by
Peng's $G$-expectation $\E$, the former holds obviously, we interest
that when the latter holds. We give three forms of strict comparison
theorem for $G$-expectation and some interesting examples.

This paper is organized as follows: in Section 2, we recall some
basic notions and results of $G$-expectation. In Section 3, we prove
strict comparison theorems for $G$-expectation and give  some
interesting examples.

\section{Preliminaries}
We present some preliminaries in the theory of $G$-expectations.
More details can be found in Peng
\cite{Peng2006a,Peng2007,Peng2006b,PengSurvey,Peng2009}.

 Let $\Omega$ be a given set and let $\mathcal{H}$ be a linear
space of real valued functions defined on $\Omega$ satisfying: if
$X_{i}\in \mathcal{H}$, $i=1,\cdots,d$,
then%
\[
\varphi(X_{1},\cdots,X_{d})\in \mathcal{H}, {\  \forall }\varphi \in
C_{l,Lip}(\mathbb{R}^{d}),
\]
where $C_{l,Lip}(\mathbb{R}^{d})$ is the space of all real
continuous functions defined on $\mathbb{R}$ such that
$$|\varphi(x)-\varphi(y)|\leq C(1+|x|^k+|y|^k)|x-y|, \forall
x,y\in\mathbb{R}^d, \  k\  \text{depends\ on}\ \varphi.$$

{{A {sublinear expectation }$\mathbb{{E}}$ on $\mathcal{H}$ is a
functional $\mathbb{{E}}:\mathcal{H}\mapsto \mathbb{R}$ satisfying
the following properties:

\noindent{\textup{(1)} Monotonicity:} \  \ \  \  \  \  \  \  \  \ \
\ \ If $X\geq Y$ then $\mathbb{{E}}[X]\geq \mathbb{{E}}[Y].$\newline
{\textup{(2)} Constant preserving: \ \ }\ \
$\mathbb{{E}}[c]=c$.\newline {\textup{(3)}} {Sub-additivity: \ \ \ \
}}}\  \  \  \  \  \  \  \ $\mathbb{{E} }[X+Y]\leq
\mathbb{{E}}[X]+\mathbb{{E}}[Y].$\newline{{{\textup{(4)} Positive
homogeneity: } \ $\mathbb{{E}}[\lambda X]=\lambda \mathbb{{E}
}[X]$,$\  \  \forall \lambda \geq0$.}}

\begin{definition}
In a sublinear expectation space $(\Omega,\mathcal{H},\e)$, a random
variable $Y$
 is said to be independent from another
 random variable $X$ if
\[
\mathbb{{E}}[\varphi(X,Y)]=\mathbb{{E}}[\mathbb{{E}}%
[\varphi(x,Y)]_{x=X}],\  \  \ {\forall }\varphi \in
C_{l,Lip}(\mathbb{R}\times \mathbb{R}).
\]

Two random variables $X_{1}$ and $X_{2}$  are called identically
distributed, denoted by $X_{1}\sim X_{2}$, if
\[
\mathbb{{E}}[\varphi(X_{1})]=\mathbb{{E}}[\varphi (X_{2})],\ \ \
{\forall}\varphi \in C_{l,Lip}(\mathbb{R}).
\]

If $\bar{X}$ is identically distributed with $X$ and independent
from $X$, then $\bar{X}$ is said to be an independent copy of $X$.
\end{definition}

\begin{definition}
In a sublinear expectation space $(\Omega,\mathcal{H},\e)$, a random
variable $X\in\mathcal{H}$ is said to be G-normal distributed with
$\s^2=\e[X^2]$ and $\x^2=-\e[-X^2]$, denoted by
$X\sim\mathcal{N}(0,[\x^2,\s^2])$, if
$$aX+b\bar{X}\sim\sqrt{a^2+b^2}X,\ \ \ \forall a,b\geq 0,$$
where $\bar{X}$ is an independent copy of $X$.
\end{definition}

We can give a characterization of $G$-normal distribution by fully
nonlinear parabolic partial differential equation (see Peng
\cite{Peng2006a,Peng2007,Peng2006b,PengSurvey,Peng2009}).

\begin{proposition}
Let $X\sim\mathcal{N}(0,[\x^2,\s^2])$, then for each $\varphi\in
C_{l,Lip}(\mathbb{R})$, the function $u$ defined by
$$u(t,x):=\e[\varphi(x+\sqrt{t}X)], t\geq 0, x\in\mathbb{R} ,$$
is the unique viscosity solution of the following G-heat equation:
\begin{equation}
\left\{\begin{array}{cl}\partial_tu-G(\partial_{xx}u)=0, &(t,x)\in [0,\infty)\times \mathbb{R}, \label{e1}\\
u|_{t=0}=\varphi, &  \end{array}\right.\
\end{equation}
where $G(\alpha)=(\s^2\alpha^+-\x^2\alpha^-)/2$.
\end{proposition}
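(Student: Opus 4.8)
The plan is to proceed in four stages: record two elementary moment facts about the $G$-normal law together with the basic regularity of $u$, extract a semigroup (self-similarity) identity from the stability property defining $\mathcal N(0,[\x^2,\s^2])$, use that identity with a Taylor expansion to verify the viscosity sub- and supersolution inequalities, and finally conclude uniqueness via the comparison principle. For the moment facts, taking $a=b=1$ in $aX+b\bar X\sim\sqrt{a^2+b^2}\,X$ and using that $\bar X$ is an independent copy of $X$ gives $2\E[X]=\sqrt2\,\E[X]$ and $2\E[-X]=\sqrt2\,\E[-X]$, whence $\E[X]=\E[-X]=0$; and for any constant $a\in\mathbb R$, positive homogeneity with $\s^2=\E[X^2]$ and $\x^2=-\E[-X^2]$ yields $\E[aX^2]=\s^2 a^+-\x^2 a^-=2G(a)$. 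Using the growth bound on $\varphi$ and the finiteness of the moments of $X$, I would also verify that $u$ is Lipschitz in $x$ and $\tfrac12$-Hölder in $t$, so that $u$ is an admissible candidate with $u|_{t=0}=\varphi$.

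The key structural step is the identity
\[
u(t+s,x)=\E[u(t,x+\sqrt s\,X)],\qquad s,t\ge0.
\]
To obtain it I would use the stability relation with $a=\sqrt s$, $b=\sqrt t$ to write $\sqrt{t+s}\,X\sim\sqrt s\,X+\sqrt t\,\bar X$, substitute into $u(t+s,x)=\E[\varphi(x+\sqrt{t+s}\,X)]$, and apply the definition of independence of $\bar X$ from $X$: fixing $X=x'$ in the outer variable, the inner expectation over $\bar X$ equals $\E[\varphi((x+\sqrt s\,x')+\sqrt t\,X)]=u(t,x+\sqrt s\,x')$ because $\bar X\sim X$, and the outer expectation then gives the claim.

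To verify the equation at a point $(t,x)$ with $t>0$ in the viscosity sense, I would take a smooth $\phi$ touching $u$ from above there, use the identity in the form $u(t,x)=\E[u(t-\delta,x+\sqrt\delta\,X)]$ for small $\delta>0$ together with monotonicity to get $0\le\E[\phi(t-\delta,x+\sqrt\delta\,X)-\phi(t,x)]$, and Taylor expand $\phi$ about $(t,x)$ inside the expectation. The first-order term is $\sqrt\delta\,\phi_x(t,x)X$, whose $G$-expectation vanishes since $\E[X]=\E[-X]=0$; the second-order term contributes $\tfrac12\delta\,\E[\phi_{xx}(t,x)X^2]=\delta\,G(\phi_{xx}(t,x))$ by the computation above; dividing by $\delta$ and letting $\delta\to0$ yields $\phi_t(t,x)-G(\phi_{xx}(t,x))\le0$. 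The supersolution inequality is obtained symmetrically by touching $u$ from below.

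The main obstacle will be controlling the Taylor remainder uniformly inside the sublinear expectation and combining the three contributions without the additivity one would have for a linear expectation: one must show that $\E$ of the error term is $o(\delta)$, which needs the finiteness and correct scaling of $\E[|X|^3]$ and a careful use of sub-additivity to separate and then recombine the first-order, second-order, and remainder pieces. Once the viscosity inequalities are in hand, uniqueness follows from the comparison principle for the fully nonlinear parabolic equation $\partial_t u-G(\partial_{xx}u)=0$, which is uniformly parabolic when $\x^2>0$ and degenerate parabolic in general but still admits comparison for the relevant class of solutions.
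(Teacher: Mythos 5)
The paper never proves this proposition at all: it is quoted as a known characterization, with the proof deferred to Peng \cite{Peng2006a,Peng2007,Peng2006b,PengSurvey,Peng2009}, so there is no in-paper argument to compare yours against. What you have written is, in substance, a reconstruction of Peng's original proof, and its structure is correct: the moment identities $\E[X]=\E[-X]=0$ and $\E[aX^2]=2G(a)$ from the stability relation and positive homogeneity; the dynamic-programming identity $u(t+s,x)=\E[u(t,x+\sqrt{s}X)]$ obtained from $\sqrt{s}X+\sqrt{t}\bar{X}\sim\sqrt{t+s}X$ and the definition of independence; the Taylor expansion inside the sublinear expectation, with sub-additivity used in opposite directions for the sub- and supersolution inequalities; and comparison for uniqueness. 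Two caveats keep this a sketch rather than a complete proof. First, since $\varphi\in C_{l,Lip}(\mathbb{R})$ is unbounded, $u$ is only locally Lipschitz in $x$ and locally $1/2$-H\"older in $t$ with polynomially growing constants, so uniqueness must be asserted within a class of polynomially growing functions, and the comparison principle you invoke at the end is then not the off-the-shelf result for bounded viscosity solutions (see \cite{Crandall}); one needs the version adapted to this growth class, as in the appendices of Peng's papers. Second, the remainder control you flag as the main obstacle hides a genuine issue beyond estimating $\E[|X|^3]$: the test function $\phi$ touches $u$ only \emph{locally}, while $x+\sqrt{\delta}X$ ranges over all of $\mathbb{R}$, so before Taylor-expanding inside $\E$ you must either modify $\phi$ away from a neighborhood of $(t,x)$ so that it dominates (or is dominated by) $u$ globally with controlled derivatives, or truncate and use the moment bounds to show the far-field contribution is $o(\delta)$. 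Both points are handled in the cited works; neither reflects a wrong idea in your outline.
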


We will give the notion of $G$-expectations. Let
$\Omega=C_{0}(\mathbb{R}^{+})$ be the space of all
$\mathbb{R}$-valued continuous paths $(\omega_{t})_{t\in
\mathbb{R}^{+}}$ with $\omega_{0}=0$. We consider the canonic
process: $B_t(\omega)=\omega_t, t\in[0,\infty)$, $\omega\in \Omega$.
For each fixed $T>0$, we set
$$Lip(\Omega _T):=\{\varphi(B_{t_1},B_{t_2},\cdots,B_{t_n}):\forall n\geq 1, t_1,\cdots, t_n\in[0,T],\forall \varphi\in C_{l,Lip}(\mathbb{R}^n)\}.$$
and
$$Lip(\Omega):=\bigcup_{n=1}^\infty Lip(\Omega_n).$$
Then we can construct a consistent sublinear expectation called
$G$-expectation $\E[\cdot]$ on $Lip(\Omega)$, such that $B_1$ is
$G$-normal distributed under $\E$ and for each $s,t\geq 0$ and
$t_1,\cdots,t_N\in[0,t]$, we have
$$\E[\varphi(B_{t_1},\cdots,B_{t_N},B_{t+s}-B_t)]=\E[\psi(B_{t_1},\cdots, B_{t_N})],$$
where
$\psi(x_1,\cdots,x_N)=\E[\varphi(x_1,\cdots,x_N,\sqrt{s}B_1)].$
Under $G$-expectation $\E[\cdot]$, the canonic process
$\{B_t:t\geq0\}$ is called $G$-Brownian motion.

The completion of $Lip(\Omega)$ under the Banach norm $\E[|\cdot|]$
is denoted by $L_G^1(\Omega)$. $\E[\cdot]$ can be extended uniquely
to a sublinear expectation on $L_G^1(\Omega)$ (see Peng
\cite{Peng2006b,PengSurvey,Peng2009}).

We denote by $\mathcal{B}(\Omega)$ the Borel $\sigma$-algebra of
$\Omega$. It was proved in Hu and Peng \cite{HP} (see also Denis, Hu
and Peng \cite{DHP}) that there exists a weakly compact family
$\mathcal{P}$ of probability measures defined on $(\Omega
,\mathcal{B}(\Omega))$ such that
\[
\mathbb{\hat{E}}[X]=\sup_{P\in \mathcal{P}}E_{P}[X],\ \ \forall X\in
L_G^1(\Omega).
\]

For such weakly compact family $\mathcal{P}$, we can  introduce the
natural Choquet capacity
\[
v(A):=\sup_{P\in \mathcal{P}}P(A),\  \ A\in \mathcal{B}(\Omega).
\]

\begin{definition}
A set $A\subset \Omega$ is polar if $v(A)=0$. A property holds
quasi-surely (q.s.) if it holds outside a polar set.
\end{definition}

\begin{definition}
\label{def} A real function $X$ on $\Omega$ is said to be
quasi-continuous if for each $\varepsilon>0$, there exists an open
set $O$ with $v(O)<\varepsilon$ such that $X|_{O^c}$ is continuous.
\end{definition}

The following proposition can be found in Denis, Hu and Peng
\cite{DHP}.
\begin{proposition}
\label{p1} For each $X\in L^1_G(\Omega)$, there exists $Y$ such that
$Y=X$ q.s. and $Y$ is quasi-continuous.
\end{proposition}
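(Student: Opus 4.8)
The plan is to approximate $X$ by continuous functions and extract a subsequence that converges fast enough to be uniformly convergent off a set of small capacity. First I would observe that every element of $Lip(\Omega)$ has the form $\varphi(B_{t_1},\dots,B_{t_n})$ with $\varphi\in C_{l,Lip}(\mathbb{R}^n)$; since $\omega\mapsto B_t(\omega)=\omega_t$ is continuous on $\Omega=C_0(\mathbb{R}^+)$ (with the topology of local uniform convergence), each such function is continuous on all of $\Omega$, hence trivially quasi-continuous. Because $L^1_G(\Omega)$ is by definition the $\E[|\cdot|]$-completion of $Lip(\Omega)$, there is a sequence $(X_n)\subset Lip(\Omega)$ with $\E[|X_n-X|]\to 0$, and passing to a subsequence I may assume $\E[|X_{n+1}-X_n|]\le 4^{-n}$.

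The key analytic tool is a Chebyshev-type inequality for the capacity: for $Z\in Lip(\Omega)$ and $\alpha>0$, integrating the pointwise bound $\mathbf{1}_{\{|Z|>\alpha\}}\le |Z|/\alpha$ against each $P\in\mathcal{P}$ and taking the supremum gives $v(|Z|>\alpha)\le \E[|Z|]/\alpha$. Applying this to $Z=X_{n+1}-X_n$ with $\alpha=2^{-n}$ yields $v(|X_{n+1}-X_n|>2^{-n})\le 2^{-n}$, and since each $X_n$ is continuous the set $A_n:=\{|X_{n+1}-X_n|>2^{-n}\}$ is open. For each $N$ put $O_N:=\bigcup_{n\ge N}A_n$, which is open, and by countable subadditivity of $v$ (immediate from $v=\sup_{P\in\mathcal{P}}P$) one has $v(O_N)\le\sum_{n\ge N}2^{-n}=2^{-N+1}$.

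On the complement $O_N^c$ the bound $|X_{n+1}-X_n|\le 2^{-n}$ holds for all $n\ge N$, so $(X_n)$ is uniformly Cauchy there and converges uniformly to a limit which, as a uniform limit of the continuous restrictions $X_n|_{O_N^c}$, is continuous on $O_N^c$. Setting $N_0:=\bigcap_N O_N$ gives a polar set, since $v(N_0)\le 2^{-N+1}$ for every $N$ forces $v(N_0)=0$; I would then define $Y:=\lim_n X_n$ on $N_0^c$ and $Y:=0$ on $N_0$. Given $\varepsilon>0$, choosing $N$ with $2^{-N+1}<\varepsilon$ exhibits the open set $O_N\supset N_0$ with $v(O_N)<\varepsilon$ off which $Y$ is continuous, so $Y$ is quasi-continuous.

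It remains to identify $Y$ with $X$ q.s. For this I would show that $Y$ is also the $L^1_G$-limit of $(X_n)$: from the $Lip(\Omega)$ inequality $|X_m-X_n|\le\sum_{k=n}^{m-1}|X_{k+1}-X_k|$ one gets $\E[|X_m-X_n|]\le\sum_{k\ge n}4^{-k}=:\delta_n$; applying Fatou's lemma under each $P\in\mathcal{P}$ to the q.s. convergence $X_m\to Y$ gives $E_P[|X_n-Y|]\le\liminf_m E_P[|X_n-X_m|]\le\delta_n$, and taking the supremum over $P$ yields $\E[|X_n-Y|]\le\delta_n\to 0$. Hence $Y$ and $X$ are the same element of $L^1_G(\Omega)$, i.e. $\E[|X-Y|]=0$, which forces $X=Y$ q.s. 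I expect this final identification to be the main obstacle, since it requires passing the q.s. estimate for $|X_n-Y|$ through the sublinear functional $\E$ and invoking that $\E[|Z|]=0$ implies $Z=0$ q.s.; both steps rest on the representation $\E=\sup_{P\in\mathcal{P}}E_P$ and the weak compactness of $\mathcal{P}$.
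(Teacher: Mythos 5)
Your proof is correct. Note that the paper itself gives no proof of this proposition --- it is quoted from Denis, Hu and Peng \cite{DHP} --- and your argument is essentially the one in that reference: the Chebyshev bound $v(|Z|>\alpha)\leq \E[|Z|]/\alpha$ for $Z\in Lip(\Omega)$, a rapidly convergent subsequence, quasi-uniform convergence off the open sets $O_N$, and Fatou's lemma under each $P\in\mathcal{P}$ to identify the limit with $X$. The only point worth making explicit is that writing $\E[|X_n-Y|]$ (and indeed $\E[|X_n-X|]\to 0$ with $X$ viewed as a function) presupposes extending $\E$ to arbitrary nonnegative measurable functions as $\sup_{P\in\mathcal{P}}E_P[\cdot]$, since $|X_n-Y|$ is not yet known to lie in $L^1_G(\Omega)$; this is exactly the convention of \cite{DHP} and of the representation theorem the paper quotes, so it is bookkeeping rather than a gap.
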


\section{Main Theorem}
In this section, we consider strict comparison theorem for Peng's
$G$-expectation $\E$, where $\E$ defined on $(\Omega,L_G^1(\Omega))$
and there exists a weakly compact family $\mathcal{P}$ such that
$\E[X]=\sup_{P\in\mathcal{P}}E_P[X]$.

\begin{theorem}
\label{t1} Let $X,Y\in L^1_G(\Omega)$ and $X\leq Y$ q.s. If
$$\inf_{P\in\mathcal{P}}P(X<Y)>0,$$ then  $\E[X]<\E[Y]$.
\end{theorem}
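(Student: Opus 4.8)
The plan is to exploit the dual representation $\E[\cdot]=\sup_{P\in\mathcal{P}}E_P[\cdot]$ together with the weak compactness of $\mathcal{P}$, and to reduce the strict inequality to the single claim that the supremum defining $\E[X]$ is \emph{attained} at some $P^{*}\in\mathcal{P}$. Write $Z:=Y-X$. Since $X\le Y$ q.s. and, by definition of $v$, every polar set is $P$-null for each $P\in\mathcal{P}$, we have $Z\ge 0$ $P$-a.s. for every $P\in\mathcal{P}$. If I can produce $P^{*}\in\mathcal{P}$ with $E_{P^{*}}[X]=\E[X]$, then, since $\E[Y]=\sup_{P}E_P[Y]\ge E_{P^{*}}[Y]$ and $E_{P^{*}}$ is linear,
$$\E[Y]\ge E_{P^{*}}[Y]=E_{P^{*}}[X]+E_{P^{*}}[Z]=\E[X]+E_{P^{*}}[Z].$$
Because $Z\ge 0$ $P^{*}$-a.s. and $P^{*}(Z>0)=P^{*}(X<Y)\ge\inf_{P\in\mathcal{P}}P(X<Y)>0$, the variable $Z$ is strictly positive on a set of positive $P^{*}$-measure, so $E_{P^{*}}[Z]>0$ and the conclusion $\E[X]<\E[Y]$ follows at once.

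Thus the whole difficulty lies in the attainment claim, which I would establish by showing that $P\mapsto E_P[X]$ is continuous on $\mathcal{P}$ in the topology of weak convergence; a continuous function on the weakly compact set $\mathcal{P}$ attains its maximum. First I would treat a bounded quasi-continuous $X$ (which exists by Proposition \ref{p1}). Given $\varepsilon>0$, quasi-continuity yields an open set $O$ with $v(O)<\varepsilon$ on whose complement $X$ is continuous; extending $X|_{O^{c}}$ by Tietze to $\tilde X\in C_b(\Omega)$ with $\|\tilde X\|_{\infty}\le\|X\|_{\infty}$, I obtain $\sup_{P\in\mathcal{P}}|E_P[X]-E_P[\tilde X]|\le 2\|X\|_{\infty}\sup_{P}P(O)\le 2\|X\|_{\infty}\varepsilon$, while $P\mapsto E_P[\tilde X]$ is weakly continuous by the portmanteau theorem. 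Letting $\varepsilon\to 0$ exhibits $P\mapsto E_P[X]$ as a uniform limit of weakly continuous functions, hence continuous.

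To pass to a general $X\in L^1_G(\Omega)$, I would truncate: set $X_N:=(-N)\vee X\wedge N$, which is bounded and quasi-continuous, so $P\mapsto E_P[X_N]$ is continuous by the previous step. The uniform-integrability property of $L^1_G(\Omega)$ established in Denis--Hu--Peng, namely $\lim_{N\to\infty}\sup_{P\in\mathcal{P}}E_P[|X|\mathbf{1}_{\{|X|>N\}}]=0$, gives $\sup_{P}|E_P[X]-E_P[X_N]|\le\sup_{P}E_P[(|X|-N)^{+}]\to 0$, so $P\mapsto E_P[X]$ is once more a uniform limit of continuous functions and therefore continuous on $\mathcal{P}$. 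Continuity on the weakly compact set $\mathcal{P}$ then produces the maximizer $P^{*}$, completing the argument.

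I expect the main obstacle to be this continuity/attainment step, specifically the interaction between quasi-continuity and weak convergence of measures: quasi-continuous functions need not be continuous, so the portmanteau theorem cannot be invoked directly, and it is the Tietze extension combined with the uniform capacity bound $v(O)<\varepsilon$ that repairs the gap. The truncation step is routine modulo the uniform-integrability characterization of $L^1_G(\Omega)$, which I would cite from Denis--Hu--Peng rather than reprove.
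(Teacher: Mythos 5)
Your proof is correct, but it follows a genuinely different route from the paper's. The paper first reduces, via sublinearity ($\E[X]-\E[Y]\le\E[X-Y]$), to showing $\E[X]<0$ when $X\le 0$ q.s. and $\inf_{P\in\mathcal{P}}P(X<0)>0$; it then takes a quasi-continuous version of $X$, applies Huber--Strassen (weak compactness of $\mathcal{P}$ gives $v(F_n)\downarrow v(F)$ for closed $F_n\downarrow F$) to produce $n_0$ with $v(X\ge -1/n_0)$ strictly below $1$, and concludes by integrating the tail formula $E_P[X]=\int_0^\infty P(X>t)\,\d t+\int_{-\infty}^0(P(X>t)-1)\,\d t$, ending with the quantitative bound $\E[X]\le (v(X\ge-1/n_0)-1)/n_0<0$. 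You instead prove that the supremum $\sup_{P\in\mathcal{P}}E_P[X]$ is attained at some $P^*$ --- by showing $P\mapsto E_P[X]$ is weakly continuous on the compact set $\mathcal{P}$, via Tietze extension of the quasi-continuous restriction together with the uniform-integrability characterization of $L^1_G(\Omega)$ from Denis--Hu--Peng --- and then invoke the classical strict comparison under $P^*$. Both arguments rest on the same two pillars (quasi-continuity from Proposition \ref{p1} and weak compactness of $\mathcal{P}$), but exploit them differently: the paper works directly with the capacity $v$ and needs only Huber--Strassen, so it is self-contained relative to its quoted preliminaries, whereas your argument imports the deeper Denis--Hu--Peng characterization of $L^1_G(\Omega)$ (that $\lim_{N\to\infty}\sup_{P\in\mathcal{P}}E_P[|X|\mathbf{1}_{\{|X|>N\}}]=0$), but in exchange delivers a stronger, reusable fact --- existence of a maximizing measure for every $X\in L^1_G(\Omega)$ --- from which the theorem (and in fact a slight strengthening: one only needs $P(X<Y)>0$ for those $P$ attaining $\E[X]$) drops out immediately. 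One small point to make explicit: your truncation step needs each $X_N$ to be quasi-continuous, so you should replace $X$ by its quasi-continuous version \emph{before} truncating; this is harmless, since modification on a polar set changes neither $E_P[X]$ for any $P\in\mathcal{P}$, nor $\E[X]$, nor the hypothesis $\inf_{P\in\mathcal{P}}P(X<Y)>0$, but it deserves a sentence.
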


\begin{proof}
Since $\E[X]-\E[Y]\leq\E[X-Y]$, we only consider the case of $X\leq
0$ q.s. and $\inf_{P\in\mathcal{P}}P(X<0)>0$.

For such $X$, we can choose $\varepsilon$ such that
$0<\varepsilon<\inf_{P\in\mathcal{P}}P(X<0)$. Since $X\in
L^1_G(\Omega)$, by Proposition \ref{p1}, there exists $Y$ such that
$X=Y$ q.s. and $Y$ is quasi-continuous. Noting that $X=Y$ q.s.
implies $\E[X]=\E[Y]$, and if $\E[Y]<0$ we also have $\E[X]<0$.
Without loss of generality we can assume that $X$ is
quasi-continuous. By Definition \ref{def}, there exists an open set
$O$ such that $v(O)<\inf_{P\in\mathcal{P}}P(X<0)-\varepsilon$ and
$X|_{O^c}$ is continuous.

Set $A=\{\omega:X(\omega)\geq 0\}$, $A_n=\{\omega:X(\omega)\geq-
{1}/{n}\}$, $F=A\cap O^c$ and $F_n=A_n\cap O^c$, then we can check
that $F_n$ is closed and $F_n\downarrow F$. Since $\mathcal{P}$ is
weakly compact, we have $v(F_n)\downarrow v(F)$ (see Huber and
Strassen \cite{HuSt}). There exists $n_0\in\mathbb{N}$ such that
$v(F_{n_0})\leq v(F)+\varepsilon$.

We have
\begin{align*}
v(X\geq-{1}/{n_0})-1&=v((A_{n_0}\cap O^c)\cup(A_{n_0}\cap O))-1\\
&\leq v(F_{n_0})+v(A_{n_0}\cap O)-1\\
&\leq v(F)+\varepsilon+v(O)-1\\
&\leq v(A)+\varepsilon+v(O)-1\\
&=-(\inf_{P\in\mathcal{P}}P(X<0)-v(O)-\varepsilon)<0
\end{align*}

Since $X\leq 0$ q.s., we have $v(X>t)=0$ for $t\geq 0$. Finally, we
get
\begin{align*}
\E[X]&=\sup_{P\in\mathcal{P}}E_P[X]=\sup_{P\in\mathcal{P}}(\int_0^\infty
P(X>t)\d t+\int_{-\infty}^0
(P(X>t)-1)\d t)\\
&\leq\int_0^\infty\sup_{P\in\mathcal{P}}P(X>t)\d
t+\int_{-\infty}^0(\sup_{P\in\mathcal{P}}P(X>t)-1)\d t
\\&\leq \int_0^\infty v(X>t)\d t+\int_{-\infty}^0
(v(X\geq t)-1)\d t=\int_{-\infty}^0 (v(X\geq t)-1)\d t\\
&\leq \int_{-{1}/{n_0}}^0(v(X\geq t)-1)\d
t\leq(v(X\geq-{1}/{n_0})-1)/n_0<0.
\end{align*}
\end{proof}

In fact, the condition $\inf_{P\in\mathcal{P}}P(X<Y)>0$ is not easy
to verify. But for $X,Y\in Lip(\Omega)$, we can represent them by
$X=\varphi(B_{t_1},B_{t_2}-B_{t_1},\cdots,B_{t_n}-B_{t_{n-1}})$ and
$Y=\psi(B_{t_1},B_{t_2}-B_{t_1},\cdots,B_{t_n}-B_{t_{n-1}})$, where
$\varphi,\psi\in C_{l,Lip}(\mathbb{R}^n)$ and $B$ is the
$G$-Brownian motion on $(\Omega,L_G^1(\Omega),\E)$. When $\x>0$, we
have the following results.

\begin{lemma}
\label{l3} Suppose $\x>0$. Let $X\in Lip(\Omega)$ with the form
$X=\varphi(B_1)$ and $\varphi(x)\leq 0, \forall x\in\mathbb{R}$. If
there exists $x_0\in\mathbb{R}$ such that $\varphi(x_0)<0$, then
$\E[X]<0$.
\end{lemma}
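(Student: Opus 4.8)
The plan is to deduce this lemma from Theorem \ref{t1} applied with $Y=0$. Since $\varphi\in C_{l,Lip}(\mathbb{R})$ is continuous and $\varphi(x_0)<0$, there is an open interval $I\ni x_0$ on which $\varphi<0$; consequently $\{X<0\}=\{\varphi(B_1)<0\}\supseteq\{B_1\in I\}$, and therefore $P(X<0)\geq P(B_1\in I)$ for every $P\in\mathcal{P}$. Because $X=\varphi(B_1)\leq 0$ q.s. and $X\in Lip(\Omega)\subset L_G^1(\Omega)$, the hypotheses of Theorem \ref{t1} (with $X$ and $Y=0$) will be met as soon as I can show the key uniform bound $\inf_{P\in\mathcal{P}}P(B_1\in I)>0$; this then yields $\E[X]<\E[0]=0$.

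To produce this uniform lower bound, I would pass from the indicator of $I$ to a smooth test function. Choose $\eta\in C_{l,Lip}(\mathbb{R})$ with $0\leq\eta\leq\mathbf{1}_{I}$, supported in $I$, and $\eta(x_0)=1$ (a tent function, say). Then $P(B_1\in I)\geq E_P[\eta(B_1)]$ for all $P$, so by the sublinear/superlinear duality $\inf_{P\in\mathcal{P}}E_P[\eta(B_1)]=-\sup_{P\in\mathcal{P}}E_P[-\eta(B_1)]=-\E[-\eta(B_1)]$ we obtain $\inf_{P\in\mathcal{P}}P(B_1\in I)\geq-\E[-\eta(B_1)]$. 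Thus it suffices to prove the strict inequality $\E[-\eta(B_1)]<0$.

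For this I would invoke the $G$-heat equation characterization \eqref{e1}. Setting $u(t,x):=\E[(-\eta)(x+\sqrt{t}\,B_1)]$, Proposition (the $G$-normal characterization) identifies $u$ as the unique viscosity solution of $\partial_t u-G(\partial_{xx}u)=0$ with initial data $u|_{t=0}=-\eta\leq 0$, and $\E[-\eta(B_1)]=u(1,0)$. Here the hypothesis $\x>0$ is decisive: it makes $G(\alpha)=(\s^2\alpha^+-\x^2\alpha^-)/2$ nondegenerate, so the equation is uniformly parabolic with ellipticity constant bounded below by $\x^2/2$. Since the initial datum $-\eta$ is nonpositive and not identically zero (indeed $-\eta(x_0)=-1<0$), the strong maximum principle forces $u(t,x)<0$ for all $t>0$ and all $x$; in particular $u(1,0)<0$, which is exactly $\E[-\eta(B_1)]<0$.

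The main obstacle is precisely this last step, the strict negativity (equivalently, the uniform positivity $\inf_{P\in\mathcal{P}}P(B_1\in I)>0$), and making the maximum-principle argument rigorous for viscosity solutions of the fully nonlinear, only Lipschitz-in-$\alpha$ operator $G$. This is also where the hypothesis $\x>0$ cannot be dropped: if $\x=0$ the operator degenerates, the family $\mathcal{P}$ may contain a measure under which $B_1$ carries no mass in $I$ (for instance the zero-variance limit), and the uniform lower bound—and hence the conclusion—genuinely fails. An alternative to the PDE route would be to estimate $\inf_{P}P(B_1\in I)$ directly from a stochastic representation of $\mathcal{P}$, but the $G$-heat equation argument is the cleanest path to the required strict bound.
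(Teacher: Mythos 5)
Your reduction steps are sound: taking an open interval $I\ni x_0$ on which $\varphi<0$, bounding $P(X<0)\geq P(B_1\in I)\geq E_P[\eta(B_1)]$ with a tent function $\eta$, using the duality $\inf_{P\in\mathcal{P}}E_P[\eta(B_1)]=-\E[-\eta(B_1)]$, and then invoking Theorem \ref{t1} with $Y=0$ --- all of this is correct, and it mirrors the Remark following Corollary \ref{c1}. But notice what you have actually accomplished: you have reduced Lemma \ref{l3} for general $\varphi$ to the statement $\E[-\eta(B_1)]<0$, which is Lemma \ref{l3} itself for the particular (bounded, Lipschitz, compactly supported) function $-\eta$. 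The whole difficulty of the lemma is concentrated in that statement, and at that point your proof says only that ``the strong maximum principle forces $u(t,x)<0$ for $t>0$.'' For the fully nonlinear operator $G(\alpha)=(\s^2\alpha^+-\x^2\alpha^-)/2$, with $u$ a priori only a viscosity solution, this is not an off-the-shelf fact; it is precisely the deep step, and you acknowledge as much (``the main obstacle'') without supplying an argument or a citation covering this setting. So the proposal is a correct reduction plus an unproven crux --- a genuine gap.

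The paper's appendix shows exactly how to fill it. For bounded Lipschitz initial data the $G$-heat equation has a classical solution $u\in C^{1,2}$ by Krylov's regularity theory; the ordinary (weak) maximum principle gives $u\leq 0$, and continuity gives a point $(t_\varepsilon,x_\varepsilon)$ with $t_\varepsilon<1/2$ and $u(t_\varepsilon,x_\varepsilon)<-\varepsilon$. The key device is then to regard the nonlinear equation as a \emph{linear} parabolic equation with merely measurable coefficients, $\partial_t u-a(t,x)\partial_{xx}u=0$, where $a(t,x)$ equals $\s^2/2$ or $\x^2/2$ according to the sign of $\partial_{xx}u$; the hypothesis $\x>0$ makes this uniformly elliptic. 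The Krylov--Safonov Harnack-type estimate (Lemma \ref{l4}), applied to $v=-u\geq 0$ after a scaling that places $(t_\varepsilon,x_\varepsilon)$ and $(1,0)$ in the correct parabolic cylinder, then propagates the strict negativity forward in time to give $u(1,0)<0$. If you wish to keep your formulation via a ``strong maximum principle,'' you must either reproduce this Krylov--Safonov argument or cite a parabolic Harnack or strong-maximum-principle theorem valid for (viscosity or $C^{1,2}$) solutions of uniformly parabolic fully nonlinear equations; without that, the central claim of the lemma remains unproved. As a side remark, your detour through Theorem \ref{t1} is also heavier than needed: the paper handles general $\varphi\in C_{l,Lip}(\mathbb{R})$ by simply dominating $\varphi\leq\varphi'\leq 0$ with $\varphi'$ bounded Lipschitz and $\varphi'(x_0)=\varphi(x_0)$, and then using monotonicity, which avoids quasi-continuity, capacities, and the machinery behind Theorem \ref{t1} altogether.
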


The proof of this lemma depends on some deep estimates of fully
nonlinear parabolic partial different equations, which initially
obtained by Krylov and Safonov \cite{KS}, we will prove it in
Appendix.

\begin{theorem}
\label{t2} Let $\x>0$ and $X,Y\in Lip(\Omega)$ with the forms
$X=\varphi(B_{t_1},B_{t_2}-B_{t_1},\cdots,B_{t_n}-B_{t_{n-1}})$ and
$Y=\psi(B_{t_1},B_{t_2}-B_{t_1},\cdots,B_{t_n}-B_{t_{n-1}})$, where
$\varphi(x)\leq\psi(x), \forall x\in\mathbb{R}^n$. Then
$\E[X]<\E[Y]$ if and only if there exists $x_0\in \mathbb{R}^n$ such
that $\varphi(x_0)<\psi(x_0)$.
\end{theorem}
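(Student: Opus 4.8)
The plan is to prove the two implications separately. The forward direction (only if) is immediate, and I would dispatch it by contraposition: if there is no $x_0$ with $\varphi(x_0)<\psi(x_0)$, then since $\varphi\leq\psi$ everywhere we must have $\varphi\equiv\psi$, so $X=Y$ as the same $C_{l,Lip}$ function of the same increments, whence $\E[X]=\E[Y]$ and the strict inequality cannot hold. The reverse direction (if) is the substantive one, and it will be handled by peeling off the independent increments one at a time and invoking Lemma~\ref{l3} at each stage.

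For sufficiency, write $\xi_i:=B_{t_i}-B_{t_{i-1}}$ with $t_0=0$, so that $X=\varphi(\xi_1,\ldots,\xi_n)$ and $Y=\psi(\xi_1,\ldots,\xi_n)$, the $\xi_i$ are mutually independent under $\E$, and $\xi_i\sim\sqrt{t_i-t_{i-1}}\,B_1$ is $G$-normal; assuming without loss of generality that $t_1<\cdots<t_n$, each $\xi_i$ has strictly positive lower variance $(t_i-t_{i-1})\x^2>0$. By sub-additivity $\E[X]-\E[Y]\leq\E[X-Y]=\E[h(\xi_1,\ldots,\xi_n)]$, where $h:=\varphi-\psi\in C_{l,Lip}(\mathbb{R}^n)$ satisfies $h\leq 0$ and $h(x_0)<0$. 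It therefore suffices to show $\E[h(\xi_1,\ldots,\xi_n)]<0$.

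To this end I would set up a downward induction over the increments. Define $h_k(x_1,\ldots,x_k):=\E[h(x_1,\ldots,x_k,\xi_{k+1},\ldots,\xi_n)]$ for $0\leq k\leq n$, obtained by integrating out the last $n-k$ independent increments through the tower property, so that $h_n=h$ and $h_0=\E[h(\xi_1,\ldots,\xi_n)]$. I would show that each $h_k$ is nonpositive and is strictly negative at some point. Nonpositivity follows from monotonicity since $h_{k+1}\leq 0$. For strict negativity, if $h_{k+1}(x_1^\ast,\ldots,x_{k+1}^\ast)<0$, then the slice $u\mapsto h_{k+1}(x_1^\ast,\ldots,x_k^\ast,u)$ is a nonpositive $C_{l,Lip}$ function that is strictly negative at $u=x_{k+1}^\ast$, and after rescaling the argument of $B_1$ Lemma~\ref{l3} yields $h_k(x_1^\ast,\ldots,x_k^\ast)=\E[h_{k+1}(x_1^\ast,\ldots,x_k^\ast,\xi_{k+1})]<0$. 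Running this from $k=n$ down to $k=0$ gives $h_0<0$, i.e. $\E[X-Y]<0$, and hence $\E[X]<\E[Y]$.

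The main obstacle I anticipate lies not in the induction, which is routine once Lemma~\ref{l3} is in hand, but in the two supporting facts it relies on: that the partial $G$-expectation $h_k$ of a $C_{l,Lip}$ function of the remaining independent increments is again $C_{l,Lip}$, so that Lemma~\ref{l3} genuinely applies to each one-dimensional slice, and that the rescaling $\xi_{k+1}\sim\sqrt{t_{k+1}-t_k}\,B_1$ preserves the hypothesis $\x>0$. The deeper analytic difficulty, namely the Krylov--Safonov type estimates, is entirely confined to Lemma~\ref{l3}, which we take as given.
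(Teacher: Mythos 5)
Your proposal is correct and takes essentially the same route as the paper: the paper also treats necessity as immediate and proves sufficiency by peeling off the independent increments one at a time via the tower property, applying Lemma~\ref{l3} to the one-dimensional slice at each stage. The only organizational difference is that you apply sub-additivity once at the outset to reduce to the single function $h=\varphi-\psi\leq 0$, whereas the paper carries both $\varphi$ and $\psi$ through the recursion (using sub-additivity implicitly at each step); the substance, including the supporting facts you flag (closure of $C_{l,Lip}$ under partial $G$-expectation and the scaling $\xi_{k+1}\sim\sqrt{t_{k+1}-t_k}\,B_1$ preserving $\x>0$), is identical.
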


\begin{proof}
The necessity is obviously, we only need to prove the sufficiency.

We first consider the case of $X=\varphi(B_{t_1},B_{t_2}-B_{t_1})$
and $Y=\psi(B_{t_1},B_{t_2}-B_{t_1})$ with $\varphi\leq\psi$ and
there exists $(x_1,x_2)\in\mathbb{R}^2$ such that
$\varphi(x_1,x_2)<\psi(x_1,x_2)$. By Lemma \ref{l3}, we have
$\E[\varphi(x,B_{t_2}-B_{t_1})]\leq\E[\psi(x,B_{t_2}-B_{t_1})]$ for
each $x\in\mathbb{R}$ and
$\E[\varphi(x_1,B_{t_2}-B_{t_1})]<\E[\psi(x_1,B_{t_2}-B_{t_1})]$.
Let us use Lemma \ref{l3} again, we conclude
$\E[\varphi(B_{t_1},B_{t_2}-B_{t_1})]<\E[\psi(B_{t_1},B_{t_2}-B_{t_1})]$.

For $X=\varphi(B_{t_1},B_{t_2}-B_{t_1},\cdots,B_{t_n}-B_{t_{n-1}})$
and $Y=\psi(B_{t_1},B_{t_2}-B_{t_1},\cdots,B_{t_n}-B_{t_{n-1}})$, we
repeat above procedure, thus $\E[X]<\E[Y]$.
\end{proof}

The condition of $\x>0$ is necessary. When $\s=0$, the above strict
comparison theorem does not hold.
\begin{example}
Let $\x=0$ and $X=B_1\wedge 0$, then we can check that
$u(t,x)=x\wedge 0$ is the unique viscosity solution of $G$-heat
equation: $\partial_t u=\s^2(\partial_{xx} u)^+/2$ with initial
condition $u(0,x)=x\wedge 0$. But we have $\E[X]=u(1,0)=0$. The
strict comparison theorem does not hold.
\end{example}

\begin{corollary}
\label{c1} Let $B_t$ be a $G$-Brownian motion with
$\overline{\sigma}^2=\E[B_1^2]\geq
-\E[-B_1^2]=\underline{\sigma}^2>0$. Then we have
$$\inf_{P\in\mathcal{P}}P(a\leq B_t\leq b)>0,$$
where $-\infty\leq a<b\leq\infty$ and $t>0$.
\end{corollary}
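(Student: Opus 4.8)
The plan is to convert the statement about the lower envelope $\inf_{P\in\mathcal{P}}P(a\le B_t\le b)$ into an upper $G$-expectation and then invoke Lemma \ref{l3}. The key identity is that for any event $A$,
$$\inf_{P\in\mathcal{P}}P(A)=-\sup_{P\in\mathcal{P}}E_P[-\mathbf{1}_A]=-\E[-\mathbf{1}_A],$$
so it suffices to exhibit a strictly negative upper expectation. Since the indicator $\mathbf{1}_{[a,b]}$ does not belong to $Lip(\Omega)$, I would first replace it by a smooth one-sided minorant: because $a<b$, the open interval $(a,b)$ is nonempty, so I can pick $h\in C_{l,Lip}(\mathbb{R})$ with $0\le h\le\mathbf{1}_{[a,b]}$ and $h\not\equiv 0$ (a bump supported in $(a,b)$). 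This simultaneously handles the unbounded cases $a=-\infty$ and $b=\infty$, since only a bounded bump sitting inside the interior is required.

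With such an $h$ fixed, for every $P\in\mathcal{P}$ one has $P(a\le B_t\le b)=E_P[\mathbf{1}_{[a,b]}(B_t)]\ge E_P[h(B_t)]$, whence
$$\inf_{P\in\mathcal{P}}P(a\le B_t\le b)\ \ge\ \inf_{P\in\mathcal{P}}E_P[h(B_t)]\ =\ -\E[-h(B_t)].$$
It then remains to prove $\E[-h(B_t)]<0$. For this I would use the scaling already built into the construction of $G$-Brownian motion: taking the increment relation with no conditioning times and initial time zero gives $\E[g(B_t)]=\E[g(\sqrt{t}\,B_1)]$ for every $g\in C_{l,Lip}(\mathbb{R})$, so that $\E[-h(B_t)]=\E[\varphi(B_1)]$ with $\varphi(x):=-h(\sqrt{t}\,x)$. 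By construction $\varphi\le 0$ everywhere and $\varphi(x_0)<0$ for any $x_0$ with $h(\sqrt{t}\,x_0)>0$. Since $\x>0$, Lemma \ref{l3} applies directly and yields $\E[\varphi(B_1)]<0$, hence $\E[-h(B_t)]<0$ and the claimed strictly positive lower bound.

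The only points needing care are bookkeeping ones: checking that $-h(B_t)$ is genuinely of the form $\varphi(B_1)$ to which Lemma \ref{l3} applies, which is exactly where the scaling identity together with $h\in C_{l,Lip}$ enters (guaranteeing $-h(B_t)\in Lip(\Omega)$), and noting that the pointwise inequality $h\le\mathbf{1}_{[a,b]}$ makes $P(a\le B_t\le b)\ge E_P[h(B_t)]$ hold $P$-a.s. simultaneously for all $P\in\mathcal{P}$. There is no real analytic obstacle at this level; the full strength of the hypothesis $\x>0$ is absorbed into Lemma \ref{l3}, whose proof rests on the Krylov--Safonov type estimates for the $G$-heat equation. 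I would emphasize that the argument does not go through Theorem \ref{t1}: the cleaner route via the smooth minorant and Lemma \ref{l3} delivers a \emph{uniform}, strictly positive bound rather than merely positive probability under each fixed $P$.
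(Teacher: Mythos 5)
Your proof is correct. The paper itself states Corollary \ref{c1} without an explicit proof; your argument --- bounding $P(a\le B_t\le b)$ from below by $E_P[h(B_t)]$ for a nonnegative bump $h\in C_{l,Lip}(\mathbb{R})$ supported in $(a,b)$, rewriting $\inf_{P\in\mathcal{P}}E_P[h(B_t)]=-\E[-h(B_t)]$ via the representation on $Lip(\Omega)\subset L^1_G(\Omega)$, and then applying the scaling $\E[g(B_t)]=\E[g(\sqrt{t}B_1)]$ together with Lemma \ref{l3} --- is exactly the natural derivation that makes the statement a corollary of Lemma \ref{l3} (equivalently, of Theorem \ref{t2} with $n=1$, $\varphi=-h$, $\psi=0$), i.e.\ essentially the route the paper intends.
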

%

\begin{remark}
Suppose $\underline{\sigma}>0$ and $X,Y\in Lip(\Omega)$ satisfy the
same conditions in Theorem \ref{t2}. By Corollary \ref{c1}, we can
prove that $\inf_{P\in\mathcal{P}}P(X<Y)>0$, so by Theorem \ref{t1},
we also have $\E[X]<\E[Y]$.
\end{remark}

In the end, we give another form of strict comparison theorem.

\begin{theorem}
Let $X,Y\in L^1_G(\Omega)$ and  $X\leq Y$ q.s.. If $v(X<Y)>0$ and
$X$ has mean certainty, i.e., $\E[X]=-\E[-X]$, then $\E[X]<\E[Y]$.
\end{theorem}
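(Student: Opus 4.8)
The key observation is that the mean-certainty hypothesis forces $E_P[X]$ to be constant over the whole family $\mathcal{P}$. Indeed, from the sublinear representation one has $-\E[-X]=-\sup_{P\in\mathcal{P}}E_P[-X]=\inf_{P\in\mathcal{P}}E_P[X]$, while $\E[X]=\sup_{P\in\mathcal{P}}E_P[X]$. Hence the equality $\E[X]=-\E[-X]$ says precisely that $\sup_{P}E_P[X]=\inf_{P}E_P[X]$, so $E_P[X]=\E[X]$ for every $P\in\mathcal{P}$. This is the step I would establish first, since it pins the value of $X$ to a single ordinary constant along all the measures and removes all ambiguity from $X$.

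Next I would exploit the hypothesis $v(X<Y)>0$. By definition $v(X<Y)=\sup_{P\in\mathcal{P}}P(X<Y)>0$, so there must exist at least one $P^*\in\mathcal{P}$ with $P^*(X<Y)>0$; no compactness is needed here, only that a strictly positive supremum cannot arise from values that are identically zero. Working under this single classical measure $P^*$, the q.s. inequality $X\leq Y$ gives $Y-X\geq 0$ $P^*$-almost surely (a polar set has $P^*$-measure zero), while simultaneously $P^*(Y-X>0)=P^*(X<Y)>0$. The classical strict comparison theorem for the linear expectation $E_{P^*}$ then yields $E_{P^*}[Y-X]>0$; integrability is not an issue, since $X,Y\in L^1_G(\Omega)$ forces $E_{P^*}[|X|]\leq\E[|X|]<\infty$ and likewise for $Y$.

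Finally I would combine the two facts. Using $E_{P^*}[X]=\E[X]$ from the first step and $E_{P^*}[Y-X]>0$ from the second,
\[
\E[Y]=\sup_{P\in\mathcal{P}}E_P[Y]\geq E_{P^*}[Y]=E_{P^*}[X]+E_{P^*}[Y-X]=\E[X]+E_{P^*}[Y-X]>\E[X],
\]
which is the desired strict inequality.

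I do not anticipate a serious obstacle: the argument is short once the mean-certainty assumption is read as constancy of $E_P[X]$. The only points requiring care are the identification $-\E[-X]=\inf_{P\in\mathcal{P}}E_P[X]$, which is routine from the representation, and the measurability of the event $\{X<Y\}$, which is guaranteed once one works with quasi-continuous versions of $X$ and $Y$ as provided by Proposition \ref{p1}. Conceptually, this version trades the uniform lower bound $\inf_{P\in\mathcal{P}}P(X<Y)>0$ of Theorem \ref{t1} for the much weaker $v(X<Y)>0$, the price being the extra structural assumption that $X$ carries no ambiguity in its mean; the latter is exactly what lets a single well-chosen $P^*$ control $\E[X]$ and $\E[Y]$ at once.
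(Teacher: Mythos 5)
Your proposal is correct and follows essentially the same route as the paper's own proof: both arguments pick a measure $P^*\in\mathcal{P}$ with $P^*(X<Y)>0$ (available since $v(X<Y)=\sup_{P\in\mathcal{P}}P(X<Y)>0$), note that $X\leq Y$ q.s. forces $P^*(X\leq Y)=1$, apply the classical strict comparison under $P^*$, and use the mean-certainty hypothesis to identify $E_{P^*}[X]=\E[X]$, yielding $\E[X]=E_{P^*}[X]<E_{P^*}[Y]\leq\E[Y]$. The only difference is presentational: you spell out the identity $-\E[-X]=\inf_{P\in\mathcal{P}}E_P[X]$ and the integrability of $Y-X$ under $P^*$, which the paper leaves implicit.
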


\begin{proof}
Since $X\leq Y$ q.s. and $v(X<Y)>0$, there exists $P\in\mathcal{P}$
such that $P(X\leq Y)=1$ and $P(X<Y)>0$. Therefore $E_P[X]<E_P[Y].$
Since $\E[X]=-\E[-X]$, we have
$$\E[X]=E_P[X]<E_P[Y]\leq\E[Y].$$
\end{proof}

But unfortunately, in general, if $X\leq Y$ and $v(X<Y)>0$, it does
not imply $\E[X]<\E[Y]$. We give the following counterexample, in
which we will use the notion of quadratic variation process of
$G$-Brownian motion $\langle B\rangle_t$. More details about this
process can be found in Peng
\cite{Peng2006a,Peng2007,Peng2006b,PengSurvey,Peng2009}.
\begin{example}
Let $B_t$ be a $G$-Brownian motion with
$\overline{\sigma}^2=\E[B_1^2]>\underline{\sigma}^2=-\E[-B_1^2]>0$.
we consider $\langle B\rangle_t$ and $\overline{\sigma}^2t$, then we
have $\langle B\rangle_t\leq \overline{\sigma}^2t$ q.s., and we can
choose $P\in\mathcal{P}$ such that $B_t$ becomes the classical
Brownian motion with $E_P[B_1^2]=\underline{\sigma}^2$. Under this
 $P$, the quadratic variation process $\langle B\rangle_t$ equals $\underline{\sigma}^2t$ P-a.s.,
  so we have $P(\langle B\rangle_t<\overline{\sigma}^2t)=1$. It is
easy to get  $v(\langle B\rangle_t<\overline{\sigma}^2t)=1$, but
$\E[\langle
B\rangle_t]=\overline{\sigma}^2t=\E[\overline{\sigma}^2t]$.
\end{example}

\appendix
\section{Proof of Lemma \ref{l3}}

In this section, we complete the proof of Lemma \ref{l3}. We always
suppose $\underline{\sigma}>0$.

The following lemma is a spacial case of Theorem 1.1 in Krylov and
Safonov \cite{KS}. We denote $B_R(x^0):=\{x:|x-x^0|<R\}$,
$B_R:=B_R(0)$ and $Q_{\theta,R}:=(0,\theta R^2)\times B_R$.
\begin{lemma}
\label{l4} Let $\theta>1$ and $R\leq 2$, $u\in
C^{1,2}(Q_{\theta,R})$, $u\geq 0$ be such that
\begin{equation*}
\label{eee}
\partial_tu-a(t,x)\partial_{xx}u=0,\ \ \text{in}\ Q_{\theta,R},
\end{equation*}
where $a\in L^\infty((0,\infty)\times\mathbb{R})$ and for some
$\lambda>0$, $\lambda^{-1}\leq a(t,x)\leq \lambda, \ \forall
(t,x)\in (0,\infty)\times\mathbb{R} $.

Then there is a constant $C$ depending only on $\lambda,\theta$ such
that
$$u(\theta R^2,x)\geq Cu(R^2,0), \ \forall x\in B_{R/2}.$$
\end{lemma}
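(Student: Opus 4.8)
The plan is to recognize the statement as a one-sided specialization of the parabolic Harnack inequality for nonnegative solutions of uniformly parabolic equations in non-divergence form with merely measurable coefficients, and to derive it from the full Harnack inequality. First I would normalize by parabolic scaling: setting $v(t,x)=u(R^2t,Rx)$ turns the equation into one of the same form on $Q_{\theta,1}$, with a rescaled coefficient $\tilde a(t,x)=a(R^2t,Rx)$ still satisfying $\lambda^{-1}\le\tilde a\le\lambda$, and reduces the claim to $v(\theta,x)\ge Cv(1,0)$ for $x\in B_{1/2}$, with $C=C(\lambda,\theta)$ independent of $R$. Thus it suffices to treat $R=1$, and the hypothesis $R\le 2$ only serves to keep the starting cylinder bounded.

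The core is to establish the two halves of the Harnack inequality. The essential analytic tool is the parabolic Aleksandrov--Bakelman--Pucci (ABP) estimate, which bounds pointwise values of (super)solutions by integrals over the parabolic contact set and is the only device that survives the absence of coefficient regularity. Using ABP together with an explicitly constructed barrier (a supersolution that is of size $1$ near the center and small on a later slice), I would prove the critical density estimate: if $v\ge 0$ and $\inf v\le 1$ on a sub-cylinder, then $\{v\le M\}$ occupies at least a fixed fraction of an adjacent earlier cylinder once $M$ is large. Iterating this measure estimate through a parabolic Calder\'{o}n--Zygmund covering (the Krylov--Safonov ``ink-spots'' lemma of \cite{KS}) yields exponential decay of the distribution function of $v$, hence the weak Harnack inequality $(\tfrac{1}{|Q^-|}\int_{Q^-}v^p)^{1/p}\le c^{-1}\inf_{Q^+}v$ for some small $p>0$, where $Q^-$ precedes $Q^+$ in time. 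Dually, applying the same ABP-and-barrier scheme to $v$ as a subsolution gives the local maximum estimate $\sup_{Q'}v\le C'(\tfrac{1}{|Q''|}\int_{Q''}v^p)^{1/p}$. Composing the two produces the full Harnack inequality $\sup_{Q^-}v\le H\inf_{Q^+}v$ with $H=H(\lambda,\theta)$.

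To conclude, I would position the cylinders so that the earlier point $(1,0)$ lies in $Q^-$ and the entire later slice $\{\theta\}\times B_{1/2}$ lies in the closure of $Q^+$; the required positive time gap between $Q^-$ and $Q^+$, together with the spatial containment, is exactly where $\theta>1$ enters and must be arranged quantitatively in terms of $\lambda,\theta$. Then $v(1,0)\le\sup_{Q^-}v\le H\inf_{Q^+}v\le Hv(\theta,x)$ for every $x\in B_{1/2}$, which is the assertion with $C(\lambda,\theta)=H^{-1}$; undoing the scaling returns the statement for general $R$.

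The main obstacle is the density estimate and its iteration under only $L^\infty$ control of $a$: because no pointwise differentiation of the equation is available, every step must be routed through ABP and measure theory rather than classical comparison, and the covering argument has to respect the anisotropic parabolic scaling $t\sim x^2$. In the one-dimensional setting treated here there is a potential shortcut I would also consider: the equation admits the representation $v(t,x)=E[v(0,X_t)]$ for a time-inhomogeneous diffusion $dX_s=\sqrt{2\tilde a}\,dW_s$ (solvable for measurable $\tilde a$ by the Engelbert--Schmidt criterion), so that the Harnack comparison reduces to two-sided hitting-probability estimates computed through scale functions, possibly bypassing the full Calder\'{o}n--Zygmund iteration. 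Either route reproduces the content quoted from \cite{KS}, and in both the delicate point is that all constants depend only on $\lambda$ and $\theta$.
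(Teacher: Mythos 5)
You should first be aware that the paper contains no proof of Lemma \ref{l4} at all: it is quoted, without proof, as a special case of Theorem 1.1 of Krylov and Safonov \cite{KS}, and the appendix's actual work is the deduction of Lemma \ref{l3} from it. So your proposal is not an alternative to an argument in the paper, but a reconstruction of the cited theorem itself. As such, your outline follows the standard modern route to Krylov--Safonov theory and is structurally sound: the parabolic rescaling $v(t,x)=u(R^2t,Rx)$ is legitimate here precisely because the equation has no lower-order terms, so the ellipticity bounds (hence all constants) are scale-invariant and the restriction $R\leq 2$ is immaterial; and the chain of ABP estimate, barrier, critical density lemma, ink-spots covering, weak Harnack plus local maximum principle, and finally the full Harnack inequality does yield the claim once the cylinders are positioned with $(1,0)$ in the earlier one and $\{\theta\}\times B_{1/2}$ in the later one. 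Note, however, that this is historically and logically backwards relative to \cite{KS}, where the pointwise growth estimate stated in the Lemma is the primitive result and the Harnack inequality is a corollary of it; either order is valid, but carrying your plan out in full would amount to rewriting \cite{KS}, which is exactly the labor the paper's citation is designed to avoid.

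Two points in your sketch conceal genuine work. First, when $\theta$ is close to $1$, the time gap between the slices $t=1$ and $t=\theta$ is small, so a single application of the Harnack inequality on one fixed pair of cylinders cannot reach all of $B_{1/2}$: the spatial reach of one application shrinks with the allotted time, and you must chain on the order of $(\theta-1)^{-1}$ overlapping small cylinders across $B_{1/2}$; this chaining, which you only gesture at, is precisely where the dependence of $C$ on $\theta$ arises. Second, the proposed probabilistic shortcut is not available as stated: scale-function techniques apply to time-homogeneous one-dimensional diffusions, whereas here $a$ (hence $\tilde a$) depends measurably on $t$. The representation $v(t,x)=E[v(0,X_t)]$ via It\^{o}'s formula is unproblematic for a $C^{1,2}$ solution, but the two-sided hitting-probability estimates you would then need, uniform in the measurable coefficient, are essentially the Krylov--Safonov estimates themselves, so this route is not a genuine simplification. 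With the shortcut deleted and the chaining made quantitative, your main route is a correct, if lengthy, substitute for the citation.
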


We now give the proof of Lemma \ref{l3}.

\noindent\textbf{Proof of Lemma \ref{l3}}. We first consider the
case of $\varphi\in C_{b,Lip}(\mathbb{R})$ (bounded and Lipschitzian
continuous). Then the $G$-heat equation (\ref{e1}) has the unique
classical solution, i.e., $u(t,x)\in C^{1,2}((0,2)\times\mathbb{R})$
(see Krylov \cite{Krylov1}). Since $\varphi(x)\leq 0$ for all
$x\in\mathbb{R}$, by the well-known maximal principle, we have
$u(t,x)\leq 0$. Since $\varphi(x_0)<0$, by the continuity of
$u(t,x)$, there exists $\varepsilon>0$ and
$(t_\varepsilon,x_\varepsilon)\in (0,1/2)\times\mathbb{R}$, such
that $u(t_\varepsilon,x_\varepsilon)<-\varepsilon$.

We set $v(t,x)=-u(t,x_\varepsilon-Mx)$, where
$M>2|x_\varepsilon|/\sqrt{t_\varepsilon}$. It is easy to verify that
$v(t,x)$ is the unique solution of the following PDE:
\begin{equation*}
\left\{\begin{array}{cl}
\partial_t v-a(t,x)\partial_{xx}v=0,   &(t,x)\in (0,2)\times\mathbb{R}, \\
v|_{t=0}=-\varphi(x_\varepsilon-Mx),   &x\in\mathbb{R},
\end{array}\right.\
\end{equation*}
where
\begin{equation*}
a(t,x)=\left\{\begin{array}{ll} \overline{\sigma}^2/2M^2&
\text{for}\
(t,x) \ \text{such\ that\ }\ \partial_{xx}u(t,x_\varepsilon-Mx)\geq 0,\\
\underline{\sigma}^2/2M^2&\text{otherwise}.
\end{array}\right.
\end{equation*}

Then $v(t,x)$ satisfies all the condition of above Lemma. We get
$v(1,x_\varepsilon/M)\geq
Cv(t_\varepsilon,0)=-Cu(t_\varepsilon,x_\varepsilon)>0$, where $C>0$
depending on
$\overline{\sigma},\underline{\sigma},x_\varepsilon,t_\varepsilon$.
So we have
$\hat{\mathbb{E}}[\varphi(B_1)]=u(1,0)=-v(1,x_\varepsilon/M)<0$.

For each $\varphi\in C_{l,Lip}(\mathbb{R})$ with $\varphi(x)\leq 0$
and $\varphi(x_0)<0$, we can choose $\varphi'\in
C_{b,Lip}(\mathbb{R})$ such that $\varphi(x)\leq \varphi'(x)\leq 0$
and $\varphi'(x_0)=\varphi(x_0)<0$, then we have
$$\E[\varphi(B_1)]\leq \E[\varphi'(B_1)]<0.$$
$\Box$

\section*{Acknowledgements}
The author would like to  thank Professor  S. Peng for his helpful
discussions.

\end{document}